\newcommand{\bC}{\mathbb{C}}
\newcommand{\bZ}{\mathbb{Z}}
\newcommand{\bQ}{\mathbb{Q}}
\newcommand{\fsl}{\mathfrak{sl}}
\newcommand{\fgl}{\mathfrak{gl}}
\newcommand{\SL}{\mathrm{SL}}
\newcommand{\Hom}{\mathrm{Hom}}
\newcommand{\End}{\mathrm{End}}
\newtheorem{theorem}{Theorem}[section]
\newtheorem{defn}[theorem]{Definition}
\newtheorem{cor}[theorem]{Corollary}
\newtheorem{example}[theorem]{Example}
\newtheorem{ex}[theorem]{Example}
\newcommand{\incg}[2][.5in]{\setbox5=\hbox{\;\includegraphics[height=#1]{#2}\;}%
\dimen1=-#1\divide\dimen1 by 2\raise\dimen1\box5}
\newcommand{\incgs}[1]{\setbox5=\hbox{\;\includegraphics[height=.25in]{#1}\;}\raise-7pt\box5}
\newlength{\cellsz}
\newcounter{cellsize}
\newcommand{\setcellsize}[1]{%
  \setcounter{cellsize}{#1}%
  \setlength{\cellsz}{\value{cellsize}\unitlength}}%
\newcommand\cellify[1]{\def\thearg{#1}\def\nothing{}%
\ifx\thearg\nothing \vrule width0pt height\cellsz depth0pt\else
\hbox to 0pt{{\begin{picture}(\value{cellsize},\value{cellsize})
  \put(0,0){\line(1,0){\value{cellsize}}}
  \put(0,0){\line(0,1){\value{cellsize}}}
  \put(\value{cellsize},0){\line(0,1){\value{cellsize}}}
  \put(0,\value{cellsize}){\line(1,0){\value{cellsize}}} \end{picture} \hss}}\fi%
\vbox to \cellsz{ \vss \hbox to \cellsz{\hss$#1$\hss} \vss}}
\newcommand\tableau[1]{\vcenter{\vbox{\let\\\cr
\baselineskip -16000pt \lineskiplimit 16000pt \lineskip 0pt
\ialign{&\cellify{##}\cr#1\crcr}}}}
\newcommand\tabl[1]{\vtop{\let\\\cr
\baselineskip -16000pt \lineskiplimit 16000pt \lineskip 0pt
\ialign{&\cellify{##}\cr#1\crcr}}}
\begin{document}
\title{Web bases for the general linear groups}
\author{Bruce W. Westbury}
\address{Mathematics Institute\\
University of Warwick\\
Coventry CV4 7AL}
\email{Bruce.Westbury@warwick.ac.uk}
\date{\today}

\begin{abstract} Let $V$ be the representation of the quantised enveloping algebra
of $\fgl(n)$ which is the $q$-analogue of the vector representation and let $V^*$
be the dual representation. In this paper we construct a basis of the representation
$\otimes^r(V\oplus V^*)$ for each $r>0$.
\end{abstract}
\maketitle

\section{Introduction}
The aim of this paper is to study the tensor products of the tensor products
of copies of the vector representation and its dual for
general linear Lie algebras and their quantised enveloping
algebras from the diagram point of view.

The background to this paper is a broad program; namely, given a pivotal category
find a finite presentation. This problem is stated in
\cite[\S 10]{MR1680395}, \cite{morrison-2007}, \cite[Problem 12.18]{MR2065029},
\cite[Appendix B]{MR1929335}, \cite{MR2559686}, \cite{MR2577673}.
The pivotal categories we will consider are all spherical categories. These were introduced in \cite{MR1686423}. Heuristically, these are an abstraction of categories of representations where tensor products and duals of representations are defined.

More precisely, given some set of vertices there is a pivotal category whose morphisms are diagrams are planar graphs with vertices from
the prescribed set. This is regarded as the free pivotal category on the vertices
and the construction can be interpreted as a left adjoint. The problem then is first
to find a finitely generated diagram category with a surjective pivotal functor to
the given category and secondly to find defining relations. This analogous to the problem of finding a finite presentation for an algebra with vertices corresponding
to generators and diagrams to words in the generators. There are two sources of
pivotal categories which have been studied from this point of view; one is representation theory and the other is subfactors. In representation theory, the
category of finite dimensional representations of a Hopf algebra is a pivotal
category. A subfactor gives a planar algebra which are equivalent to spherical
categories.

In this paper we restrict our attention to strict spherical categories
associated to the general and special linear Lie algebras. The basic
tensors are the tensors in \cite{MR1659228},\cite{math.QA/0310143},\cite{math.GT/0506403},
\cite{morrison-2007}. 

The aim of the paper is to construct a basis of $\otimes^r(V\oplus V^*)$ for each $r>0$ where $V$ is the vector representation and $V^*$ the dual representation.
The intention is that this basis should be compared with Lusztig's dual canonical
basis. The basic property of this basis is that the change of basis matrix to
the tensor product basis is triangular and preserves the weight. In particular,
the regular representation of the Hecke algebra is a weight space of $\otimes^r V$,
so we have a basis of the Hecke algebra. A further property of our basis is that
it is invariant under the involution $q\leftrightarrow q^{-1}$.

Two noteworthy properties of both this basis and the dual canonical basis
is that they are cellular bases as discussed in \cite{MR2510062} and they
are both invariant under rotations in the following sense. Let $a$, $b$ be objects in a pivotal category. Then we have natural a natural isomorphism $\mathrm{Inv}(b\otimes a^*)\cong\mathrm{Inv}(a^*\otimes b)$ since both spaces are naturaly isomorphic to $\Hom(a,b)$. This isomorphism corresponds to a bijection
between the two bases.

Taking subsets of our basis we also have bases for the spaces of heighest weight
vectors in $\otimes^r(V\oplus V^*)$. These spaces are the irreducible representations
of the centraliser algebras. In particular, the bases for the highest weight spaces
in $\otimes^rV$ are bases for the irreducible representations of the Hecke algebra.
Taking the highest weight to be the zero weight we have bases for the spaces of invariant tensors. One noteworthy property of these bases is that they are invariant
under rotation.

The relevance to the problem of finding a presentation is; first that this shows that
the basic tensors are generators in the sense that the functor from the diagram
category to the representation category is surjective; and second that it gives the
irreducible dagrams.

In \cite{rhoades}, Rhoades proved an amazing cyclic sieving result about
rectangular Young tableaux under the action of promotion. This result is
discussed in \S 7 of the survey \cite{Sagan} on the cyclic sieving phenomenon.
The paper \cite{MR2519848} gives a simpler proof of this result in the cases $n=2$ and $n=3$ using diagrams. The paper \cite{csp} gives a simpler proof of this result
for all $n$ using Lusztig's dual canonical basis. The proof in \cite{csp} can be
made almost elementary by replacing the dual canonical basis with the basis constructed
below. This essentially extends the method in \cite{MR2519848} to all $n$.

\section{Quantum groups}
In this section we give presentations for the quantised enveloping algebras
of the general and special linear Lie algebras. Then we construct the $q$-analogues
of the exterior powers of the natural representations. Then we show that the
direct sum of these representations is both an algebra and a coalgebra. This
defines the basic tensors that we will use.

\subsection{General linear Lie algebras}
\begin{defn} The quantum group $U_q(\fgl(n))$ is the $\bQ(q)$-algebra with generators
$E_i$, $F_i$ for $1\le i\le n-1$ and  $K_i$, $K_i^{-1}$ for $1\le i\le n$.
The defining relations are
\end{defn}
\[ K_iK_i^{-1}=1 \qquad K_i^{-1}K_i=1 \qquad K_i^{\pm 1}K_j^{\pm 1}=K_j^{\pm 1}K_i^{\pm 1} \]
\[ E_iK_j =\begin{cases}
            qK_jE_i & \text{if $j=i$} \\
            q^{-1}K_jE_i & \text{if $j=i+1$} \\
            E_jK_i & \text{otherwise} 
           \end{cases}
\]
\[ K_iF_j =\begin{cases}
            q^{-1}F_jK_i & \text{if $i=j$} \\
            qF_jK_i & \text{if $|i-j|=1$} \\
            F_jK_i & \text{otherwise} 
           \end{cases}
\]
\[ E_iF_j-F_jE_i=\delta_{ij}\frac{K_iK^{-1}_{i+1}-K_i^{-1}K_{i+1}}{q-q^{-1}} \]
\[ E_iE_j=E_jE_i\qquad F_iF_j=F_jF_i \qquad\text{for $|i-j|\ge 2$} \]
\[ E_i^2E_j-[2]E_iE_jE_i+E_jE_i^2=0\qquad\text{for $|i-j|=1$} \]
\[ F_i^2F_j-[2]F_iF_jF_i+F_jF_i^2=0\qquad\text{for $|i-j|=1$} \]

The algebra $U_q(\fgl(n))$ is a Hopf $\bQ(q)$-algebra. The comultiplication, $\Delta$, is defined on the generators by
\begin{align*}
 \Delta(K_i)&=K_i\otimes K_i\\
 \Delta(K_i^{-1})&=K_i^{-1}\otimes K_i^{-1}\\
 \Delta(E_i)&= E_i\otimes K_iK_{i+1}^{-1} + 1\otimes E_i \\
 \Delta(F_i)&= F_i\otimes 1 + K_i^{-1}K_{i+1}\otimes F_i
\end{align*}
The counit, $\varepsilon$, is defined on the generators by
\[ \varepsilon(K_i)=1\quad \varepsilon(K_i^{-1})=1\quad
 \varepsilon(E_i)=0\quad \varepsilon(F_i)=0
\]
The antipode, $S$, is defined on the generators by
\[ S(K_i^{\pm 1})=K_i^{\mp 1}\quad 
 S(E_i)=-E_iK_i^{-1}K_{i+1}\quad S(F_i)=-K_iK_{i+1}^{-1}F_i
\]
Then we also have
\[ S^{-1}(K_i^{\pm 1})=K_i^{\mp 1}\quad 
 S^{-1}(E_i)=-K_iK_{i+1}^{-1}E_i\quad S^{-1}(F_i)=-F_iK_i^{-1}K_{i+1}
\]

The algebra $U_q(\fsl(n))$ is the subalgebra generated by 
$E_i$, $F_i$ for $1\le i\le n-1$ and  $K_iK_{i+1}^{-1}$, $K_i^{-1}K_{i+1}$ for $1\le i\le n-1$.
This is also a Hopf algebra.

The integral form of $U_q(\fgl(n))$ is the $\bZ[q,q^{-1}]$-algebra generated by
$E_i^r/[r]!$, $F_i^r/[r]!$ for $1\le i\le n-1$, $r\ge 1$ and 
$K_i$, $K_i^{-1}$ for $1\le i\le n$.

Similarly the integral form of $U_q(\fsl(n))$ is the $\bZ[q,q^{-1}]$-algebra generated by
$E_i^r/[r]!$, $F_i^r/[r]!$ for $1\le i\le n-1$, $r\ge 1$ and 
$K_iK_{i+1}^{-1}$, $K_i^{-1}K_{i+1}$ for $1\le i\le n-1$.

The bar involution is an involution of $\bQ$-algebras and is determined by
\[ q\mapsto q^{-1}\quad K_i^{\pm 1}\mapsto K_i^{\mp 1}\quad
E_i\mapsto E_i\quad
F_i\mapsto F_i \]
This is an involution for $U_q(\fgl(n))$ and $U_q(\fsl(n))$ and restricts to an
involution on the integral forms in both cases.
\subsection{Representations}\label{reps}
In this section we construct certain representations of $U_q(\fgl(n))$
and some intertwiners. These intertwiners will be taken to be the generators
of a strict spherical category. These are given in \cite{MR1392496}.

The one dimensional representations are denoted by $\det_q^k$ for $k\in\bZ$.
These are defined by
\[ E_i\mapsto 0\qquad F_i\mapsto 0\qquad K_i^{\pm 1}\mapsto q^{\pm k}\]
Then for all $r,s\in\bZ$ we have
\[ \det{}_q^r\otimes\det{}_q^s = \det{}_q^{r+s} \]

We also define a representation of $U_q(\fgl(n))$ on $\bigwedge V$.
Let $\mathbf{n}$ be the set $\{1,2,\ldots ,n\}$ and let $\bigwedge V$
be the vector space with
basis $\{ v_I | I\subseteq\mathbf{n} \}$. Let $I\subseteq\mathbf{n}$, assume
$i,j\in \mathbf{n}$ such that $i\in I$ and $j\notin I$ then put 
$S_{i\rightarrow j}(I) = (I\backslash{i})\cup {j}$ so that $j$ is substituted for $i$.
\begin{defn}
The action of $U_q(\fgl(n))$ on $\bigwedge V$ is defined by 
\begin{align*}
E_iv_I&=\begin{cases}
          v_{S_{i+1\rightarrow i}(I)}&\text{if $i+1\in I$ and $i\notin I$}\\
          0&\text{otherwise}
         \end{cases} \\
 F_iv_I&=\begin{cases}
          v_{S_{i\rightarrow i+1}(I)}&\text{if $i\in I$ and $i+1\notin I$}\\
          0&\text{otherwise}
         \end{cases} \\
K_i^{\pm 1}v_I&=\begin{cases}
          q^{\pm 1}v_{I}&\text{if $i\in I$}\\
          v_I&\text{otherwise}
         \end{cases}
\end{align*}
\end{defn}
Note that $E^2_iv_I=0$ and $F^2_iv_I=0$ for $1\le i\le n-1$ and all $I\subset\mathbf{n}$.
Therefore the free $\bZ[q,q^{-1}]$-module on the set $\{ v_I | I\subseteq\mathbf{n} \}$
is a representation for the integral forms.
\begin{example}
Consider the vector space $V(n)$ with basis $\{v_1,\ldots ,v_n\}$.
This is a left $U_q(\fgl(n))$-module where the action of the generators is given by
\begin{align*}
 E_iv_j&=\begin{cases}
          v_{j-1}&\text{if $j=i+1$}\\
          0&\text{otherwise}
         \end{cases} \\
F_iv_j&=\begin{cases}
          v_{j+1}&\text{if $j=i$}\\
          0&\text{otherwise}
         \end{cases} \\
K_i^{\pm 1}v_j&=\begin{cases}
          q^{\pm 1}v_{j}&\text{if $j=i$}\\
          v_j&\text{otherwise}
         \end{cases}
\end{align*}
\end{example}

The vector space $\bigwedge V$ is an associative algebra.
This is based on \cite[Lemma 2.6]{MR1659228}.
For $I,J$ disjoint subsets
of $\mathbf{n}$ put
\[ \pi(I,J)=|\{(i,j)\in I\times J| i>j \}| \]
\begin{defn}
The multiplication on $\bigwedge V$ is defined by 
\begin{equation*}
 v_I\otimes v_J\mapsto \begin{cases}
          (-q)^{-\pi(I,J)}v_{I\cup J}&\text{if $I\cap J=\emptyset$}\\
          0&\text{otherwise}
         \end{cases}
\end{equation*}
\end{defn}
This multiplication is associative since both evaluations give
\begin{equation*}
 v_I\otimes v_J\otimes v_K\mapsto \begin{cases}
          (-q)^{-\pi(I,J,K)}v_{I\cup J\cup K}
&\text{if $I$,$J$,$K$ are pairwise disjoint}\\
          0&\text{otherwise}
         \end{cases}
\end{equation*}
where $\pi(I,J,K)=\pi(I,J)+\pi(I,K)+\pi(J,K)$.
The unit is $v_\emptyset$.

An alternative construction of this algebra is that it is generated by
$\{v_i|1\le i\le n-1\}$ and defining relations are
\[ v_iv_j+qv_jv_i=0\qquad\text{for $1\le i<j\le n-1$} \]
These two structures on $\bigwedge V$ are compatible in the sense that the
inclusion $\bQ(q)\rightarrow \bigwedge V$ determined by $1\mapsto v_\emptyset$
and the multiplication map $\bigwedge V\otimes \bigwedge V\rightarrow \bigwedge V$
are both homomorphisms of representations.

The vector space $\bigwedge V$ is a coalgebra.
The comultiplication $\Delta$ on $\bigwedge V$ is defined by 
\begin{equation*}
v_I\mapsto \sum_{J,K} (-1)^{\pi(J,K)}q^{|J|.|K|}v_J\otimes v_K
\end{equation*}
where the sum is over $J,K\subset\mathbf{n}$ such that $J\cap K=\emptyset$
and $J\cup K=I$.
This comultiplication is coassociative. 

The counit $\bigwedge V\rightarrow\bQ(q)$ is given by
\begin{equation*}
 v_I\mapsto\begin{cases} 1&\text{if $I=\emptyset$}\\
            0&\text{otherwise}
           \end{cases}
\end{equation*}

The counit and the comultiplication are both homomorphisms of representations.

The representation $\bigwedge V$ has a decomposition
\[ \bigwedge V = \bigoplus_{p=0}^n V(p) \]
where $V(p)$ has basis $\{ v_I | |I|=p \}$.
For $0\le p\le n$ the representation $V(p)$ is irreducible.
A highest weight vector is $v_I$ where $I=\{ i\in\mathbf{n} | i\le p\}$
and a lowest weight vector is $v_I$ where $I=\{ i\in\mathbf{n} | i\ge n-p+1\}$.

Dually we have a representation on $\bigwedge\overline{V}$.
This is the vector space with
basis $\{ \overline{v}_I | I\subseteq\mathbf{n} \}$.
\begin{defn}
The action of $U_q(\fgl(n))$ on $\bigwedge \overline{V}$ is defined by 
\begin{align*}
E_i\overline{v}_I&=\begin{cases}
          \overline{v}_{S_{i+1\rightarrow i}(I)}&\text{if $i+1\in I$ and $i\notin I$}\\
          0&\text{otherwise}
         \end{cases} \\
F_i\overline{v}_I&=\begin{cases}
          \overline{v}_{S_{i\rightarrow i+1}(I)}&\text{if $i\in I$ and $i+1\notin I$}\\
          0&\text{otherwise}
         \end{cases} \\
K_i^{\pm 1}\overline{v}_I&=\begin{cases}
          q^{\mp 1}\overline{v}_{I}&\text{if $i\in I$}\\
          \overline{v}_I&\text{otherwise}
         \end{cases}
\end{align*}
\end{defn}

It is convenient to put $V(-p)=\overline{V}(p)$ for $1\le p\le n$.
Then for all $r,s\in\bZ$ such that $-n\le r,s,r+s\le n$ we have 
a homomorphism of $U_q(n)$-modules
\begin{equation}
  V(r)\otimes V(s)\rightarrow V(r+s) 
\end{equation}

\section{Flow diagrams}
In this section we follow \cite{MR1659228}, \cite{morrison-2007} and introduce the
category of flow diagrams.
\begin{defn} The monoid of objects of the category $\mathsf{F}$ is the free monoid on the
set $\{1,\overline{1},2,\overline{2},\ldots \}$.
The category $\mathsf{F}$ is generated as a spherical category by morphisms
\begin{equation*}
 \begin{matrix}
  \includegraphics[width=1in]{flow.1} & \includegraphics[width=1in]{flow.2} \\
  a\otimes b\rightarrow a+b & \overline{a}\overline{b}\rightarrow\overline{a+b}
 \end{matrix}
\end{equation*}
The defining relations are the following associativity relations which hold for all 
labellings of the edges.
\begin{equation*}
 \incg[.6in]{flow.3}=\incg[.6in]{flow.4}\quad
 \incg[.6in]{flow.5}=\incg[.6in]{flow.6}
\end{equation*}
\end{defn}

It is usually preferable to work with reduction rules rather than relations.
In order to do this for the flow diagrams it is necessary to introduce two
infinite families of vertices. One family includes the first type of vertex.
The number of incoming lines is arbitrary and there is one outgoing line. The
condition is that the label on the outgoing line is the sum of the labels on the
incoming line. The other family has one incoming line and an arbitrary number of
outgoing lines. The condition is that the label on the incoming line is the sum
of the labels on the outgoing line.

There are infinitely many reduction rules. These reduction rules are that any edge
which connects two vertices in the same family can be contracted.

Next we define a functor from $\mathsf{F}$ to the categories of representations
of the integral form of $U_q(\fgl(n))$. The map of objects is determined by the
map on the generators. This map is given by
\[
a\mapsto\begin{cases} V(a) &\text{if $1\le a\le n$}\\ 0 &\text{if $n<a$}\end{cases}
\qquad
\overline{a}\mapsto\begin{cases} \overline{V}(a) &\text{if $1\le a\le n$}\\
0 &\text{if $n<a$}\end{cases}
\]

The functor is defined on morphisms by defining it on the generators. These are the trivalent vertices. The images of these trivalent vertices are the interwiners discussed in
section \ref{reps}. The defining relations in
$\mathsf{F}$ are satisfied since $\bigwedge V$ is an associative algebra and a coassociative
coalgebra.

The functor from $\mathsf{F}$ to the category of representations
of the integral form of $U_q(\fsl(n))$ is defined by composing with the restriction functor.
An edge labelled $n$ now corresponds to the trivial representation. However these
edges cannot be simply omitted. Instead the majority of the edge can be omitted but
two stubs at the ends of the edge need to be retained. This is discussed in \cite{morrison-2007}.

\section{Growth algorithm}\label{growth}
In this section we give the main contribution of this paper. Let $M$ be the free monoid
on the set $\{1,\overline{1},2,\overline{2},\ldots \}$.
Then we construct a flow diagram for each element of $M$. The analogue of this construction for
the exceptional Lie group $G_2$ was given in \cite{MR2320368} and the analogue for
$\mathrm{Spin}(7)$ was given in \cite{MR2388243}.

The morphisms in $\mathsf{F}$ are trivalent graphs drawn in a rectangle. In this section
the flow diagrams are trivalent graphs drawn in a triangle. The triangles are drawn as
in Figure \ref{tri}. The edge $AB$ is called the top edge of the triangle.
\begin{figure}
\begin{center}
\includegraphics[width=1in]{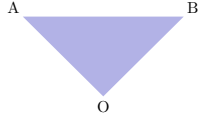}
\end{center}
\caption{Triangle}\label{tri}
\end{figure}

First we give a triangle of length one for each element 
of $M$. These are the triangles
\begin{equation*}
 \begin{array}{cc}
  \includegraphics[width=1in]{flow.14} &
  \includegraphics[width=1in]{flow.15} \\
  a & \overline{a}
 \end{array}
\end{equation*}
Then for each ordered pair of elements of $M\cup \{0\}$ we give a diamond.
It is convenient to identify $M\cup \{0\}$ with $\bZ$ by $a\mapsto a$, $\overline{a}\mapsto -a$
and $0\mapsto 0$. Then we have directed edges labelled by elements of $\bZ$. We identify a
directed edge labelled $a$ with the edge with the reverse orientation and labelled $-a$.
Then for each ordered pair $(a,b)$ of elements of $\bZ$ we give a diamond. There are two
cases, namely $a\ne b$ and $a=b$. The diamonds in these two cases are respectively
\begin{equation*}
 \includegraphics[width=1in]{flow.16} \qquad
 \includegraphics[width=1in]{flow.17} 
\end{equation*}

Now given a word in $M$ of length $r$ we draw a triangle of length $r$.
On the top edge of this triangle we draw the word as a sequence of triangles.
Then we fill in the diamonds.

For $U_q(\fgl(n))$ flow diagrams have edges labelled by $a$ and $\overline{a}$
for $1\le a\le n$. For $U_q(\fsl(n))$ the objects $\overline{a}$ and $n-a$ are
isomorphic for $1\le a\le n$. In particular, $n$ and $\overline{n}$ are isomorphic
to the trivial representation.
The cases $n=2,3,4$ are described below.

\subsection{Two part partitions}\label{twopart}
This is the case $\fsl(2)$. The category of invariant tensors in this case is the
Temperley-Lieb category. This is studied in \cite{MR96h:20029} and the account here
is based on \cite{MR1446615}.

There is one type of edge which is not directed. There are no vertices.

There are two triangles which are the vertices of a crystal graph
\begin{equation}
\includegraphics[width=1in]{triangle.1}\qquad\includegraphics[width=1in]{triangle.2}
\end{equation}
and the four diamonds
\begin{equation}
\includegraphics[width=1in]{triangle.10}\qquad\includegraphics[width=1in]{triangle.11}%
\qquad\includegraphics[width=1in]{triangle.12}\qquad\includegraphics[width=1in]{triangle.13}
\end{equation}

Here is an example of the growth algorithm.
\begin{equation}\includegraphics[width=2in]{triangle.30}\quad
\includegraphics[width=2in]{triangle.31}\end{equation}

The words are words in the alphabet $\{1,2\}$. Usually one replaces 1 by an open bracket $($
and $2$ by a closed bracket $)$. Then balanced lattice words are exactly well-formed
parentheses.
\subsection{Three part partitions}
This is the case $\fsl(3)$ and is studied in \cite{MR97f:17005} and \cite{MR1680395}.

There is one type of edge which is directed. There are six triangles.
These are the following three together with the three obtained by reversing
all directions in each of these three triangles. These are the vertices of two
crystal graphs each with three vertices.
\begin{equation*}
\includegraphics[width=1in]{triangle.3}\qquad\includegraphics[width=1in]{triangle.4}
\qquad\includegraphics[width=1in]{triangle.5}
\end{equation*}

Here are the nine diamonds.
\begin{center}
\begin{tabular}{ccc}
 ×\includegraphics[width=1in]{triangle.21}& \includegraphics[width=1in]{triangle.22}& \includegraphics[width=1in]{triangle.23} \\
×\includegraphics[width=1in]{triangle.24}& \includegraphics[width=1in]{triangle.25}& \includegraphics[width=1in]{triangle.26} \\
×\includegraphics[width=1in]{triangle.27}& \includegraphics[width=1in]{triangle.28}& \includegraphics[width=1in]{triangle.29} \\
\end{tabular}
\end{center}

Here is an example of the growth algorithm.
\begin{equation}
\includegraphics[width=2in]{triangle.32}\quad
\includegraphics[width=2in]{triangle.33}
\end{equation}

\subsection{Four part partitions}
This is the case $\fsl(4)$ and was studied in \cite{math.QA/0310143}. There are two
types of edge, one directed and one not. There are two types of vertex. There are eight
triangles. Four of these are drawn below and the other four are obtained by reversing
the direction of each directed edge in each of these diagrams. These four triangles are
the vertices of a crystal graph as are the other four vertices.
\begin{equation*}
\includegraphics[width=1in]{triangle.6}\qquad\includegraphics[width=1in]{triangle.7}%
\qquad\includegraphics[width=1in]{triangle.8}\qquad\includegraphics[width=1in]{triangle.9}
\end{equation*}

\subsection{Wave graphs}\label{wave}
Here we relate our growth diagrams to the wave graphs of \cite{math.RT/9802119}.
The construction we give is an extension of the construction in this preprint.

Let $w$ be a word of length $kn$ in the alphabet $\{1,2,\ldots n\}$.
Then associated to this word are $(n-1)$ words in the alphabet $\{0,1,2\}$.
For $1\le i\le n-1$ let $w_i$ be the word obtained from $w$ by the following
substitutions
\[ j\mapsto\begin{cases}
1&\text{if $j=i$}\\
2&\text{if $j=i+1$}\\
0&\text{otherwise}
\end{cases} \]
Then it is clear that $w$ can be recovered from the words $w_1,\ldots w_{n-1}$.
Extend the growth algorithm in \S \ref{twopart} by assigning the empty triangle
to the letter $0$. Apply this growth algorithm to each of the words $w_1,\ldots w_{n-1}$
to get $n-1$ triangles. Now bind these pages into a book by identifying the top
edges of the triangles (which forms the spine of the book). This gives the wave graph
of the word $w$.

The reason for calling this a wave graph comes from the following properties of the
wave graph. There are $k$ points marked on the spine. Each page of the book is a
triangle with some embedded arcs. Each of these arcs either connects a point on the
boundary of the page (not on the spine) with one of the marked points on the spine
or connects two of the marked points on the spine. Each marked point on the spine
is the endpoint of either one arc or else is the endpoint of two arcs which are on
adjacent pages.

The growth diagram of a word $w$ can be constructed directly from the wave graph
of the word $w$ simply by superimposing the pages of the book. The idea here is to
regard each page as transparent and then to close the book.

The special case considered in \cite{math.RT/9802119} is the case when each arc
on every page connects two points on the spine. In this special case each arc
in the book consists of one arc from each page and has one endpoint on the first
page and one endpoint on the last page. These correspond to the following:
\begin{defn}
A closed wave graph is a partition of the set $\{1,2,\ldots ,kn\}$ into $k$
subsets each with $n$ elements. For $1\le a\le k$ we have $i_1^a<\cdots < i_k^a$.
Then the condition is that there does not exist $a$ and $r\ne s$ such that
\[ i_r^a < i_s^a < i_r^{a+1} < i_s^{a+1} \]
\end{defn}
These then correspond to standard Young tableaux of shape $k^n$.
For example, for $n=2$ $k=3$ we have five standard Young tableaux. These tableaux,
the lattice word and the wave diagrams are
\begin{equation*}
 \begin{array}{ccccc}
  \tableau{{1}&{2}\\{3}&{4}\\{5}&{6}} &
  \tableau{{1}&{2}\\{3}&{5}\\{4}&{6}} &
  \tableau{{1}&{3}\\{2}&{4}\\{5}&{6}} &
  \tableau{{1}&{3}\\{2}&{5}\\{4}&{6}} &
  \tableau{{1}&{4}\\{2}&{5}\\{3}&{6}} \\
112233 & 112323 & 121233 & 121323 & 123123 \\
\includegraphics[width=.75in]{triangle.34} &
\includegraphics[width=.75in]{triangle.35} &
\includegraphics[width=.75in]{triangle.36} &
\includegraphics[width=.75in]{triangle.37} &
\includegraphics[width=.75in]{triangle.38} \\
\includegraphics[width=.5in]{flow.18} &
\includegraphics[width=.5in]{flow.19} &
\includegraphics[width=.5in]{flow.20} &
\includegraphics[width=.5in]{flow.21} &
\includegraphics[width=.5in]{flow.22} \\
\begin{pmatrix} 1 & 4 & 5 \\ 2 & 3 & 6\end{pmatrix} &
\begin{pmatrix} 1 & 5 & 6 \\ 2 & 3 & 4\end{pmatrix} &
\begin{pmatrix} 1 & 2 & 6 \\ 3 & 4 & 5\end{pmatrix} &
\begin{pmatrix} 1 & 2 & 4 \\ 3 & 5 & 6\end{pmatrix} &
\begin{pmatrix} 1 & 2 & 3 \\ 4 & 5 & 6\end{pmatrix} 
 \end{array}
\end{equation*}

\subsection{Basis}
In section \ref{growth} we defined the flow diagram of a word.
In this section we use the flow diagram of a word to construct a tensor.
Then we show that these tensors are a basis of the tensor product.

Let $w$ be a word in $\{1,\overline{1},2,\overline{2},\ldots ,n,\overline{n}\}$.
Define a type to be a word in $\{+,-\}$. The type of $w$ is the word $u$
given by the substitution $a\mapsto +$, $\overline{a}\mapsto -$ for $1\le a\le n$.
The representation $V(u)$ associated to a type $u$ is determined by the rules
\[ V(+)=V\qquad V(-)=\overline{V}\qquad V(u_1u_2)=V(u_1)\otimes V(u_2) \]
Let $M^r(n)$ be the set of words of length $r$. Then we identify this with the
tensor product basis of $\otimes^r(V\oplus \overline{V})$. This tensor product has
the obvious decomposition $\otimes^r(V\oplus \overline{V})=\sum_u V(u)$
where the sum is over types of length $r$. Then the set of words of length $r$
and type $u$ is identified with the tensor product basis of $V(u)$.

For a dominant weight
$\lambda$ let $V(\lambda)$ be the corresponding irreducible highest weight
representation. Define an involution $\lambda\mapsto \lambda^*$ by
$V(\lambda)^*\cong V(\lambda^*)$. Then the lowest weight vector of $V(\lambda^*)$
has weight $-\lambda$. Let $v^{\mathrm{hi}}_\lambda$ be a highest weight vector
in $V(\lambda)$ and $v^{\mathrm{lo}}_\lambda$ a lowest weight vector.

A flow diagram drawn in the triangle in Figure \ref{tri} is an intertwiner 
\[ \psi\colon V(OA)\otimes V(OB)^*\rightarrow V(AB) \]
Here each edge of the triangle is a directed path which meets the flow diagram
in a sequence of edges. Let the sequence of labels of these edges be $\lambda_1, \ldots
,\lambda_k$. The weight of the edge is $\lambda_1+ \ldots +\lambda_k$ and the
associated representation is $V(\lambda_1)\otimes \cdots V(\lambda_k)$.

Let $v^{\mathrm{hi}}$ be the tensor product of the highest weight vectors
in $V(OA)$ and let $v^{\mathrm{lo}}$ be the tensor product of the lowest weight vectors
in $V(OB)^*$. Then the element of $V(AB)$ associated to the flow diagram is
$\psi(v^{\mathrm{hi}}\otimes v^{\mathrm{lo}})$.

\begin{ex} The basic triangle is an intertwiner
 \[ \psi\colon V(a)\otimes V(a-1)^*\rightarrow V \]
The associated vector in $V$ has weight $a$.
\end{ex}

Let $w$ be a word of type $u$. The associated flow diagram is an intertwiner
\[ F(w)\colon V(OA)\otimes V(OB)^*\rightarrow V(u) \]
and this gives 
$F(w)(v^{\mathrm{hi}}\otimes v^{\mathrm{lo}})\in V(u)$.
Denote the coefficients of this vector in the tensor product basis by $A^w_x$.
Next we give a state sum expression for $A^w_x$.

\begin{defn} A state assigns to each edge $e$ labelled $p$ a subset of
$\{1,2,\ldots n\}$ of size $p$. Furthermore we require that at each vertex
that the subsets on the incoming edges be disjoint,
that the subsets on the outgoing edges be disjoint,
and that the two unions of subsets be equal.
\end{defn}

Each state $\sigma$ has a heft $\mathrm{ht}(\sigma)\in \bZ[q,q^{-1}]$.
The heft of a state is the product over the trivalent vertices, maxima and minima
of the diagram. Note that each term in the product is a unit in $\bZ[q,q^{-1}]$
and so for any state $\sigma$, $\mathrm{ht}(\sigma)$ is a unit.

The coefficients $A^w_x$ can be expressed as a state sum. Consider the flow
diagram $F(w)$. The boundary edges are assigned subsets by taking the highest
weight vectors on edge $OA$, lowest weight vectors on edge $OB$, and taking
the sequence of singletons on edge $AB$ to be the word $x$.

\begin{ex}
The basic triangles are labelled
\begin{equation*}
 \begin{array}{cc}
  \includegraphics[width=1.5in]{flow.39} &
  \includegraphics[width=1.5in]{flow.40} \\
  a & \overline{a}
 \end{array}
\end{equation*}
\end{ex}

Then we have
\begin{equation}\label{hf}
 A^w_x = \sum_\sigma \mathrm{ht}(\sigma)
\end{equation}
where the sum is over all states which satisfy the boundary conditions.

\begin{theorem}\label{mn} For all $r\ge 0$ the set $\{A(w)|w\in M^r\}$ is a basis of
$\otimes^r(V\oplus \overline{V})$.
\end{theorem}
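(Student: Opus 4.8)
The plan is to establish the theorem by showing that the transition matrix from the set $\{A(w)\mid w\in M^r\}$ to the tensor product basis $\{x\mid x\in M^r\}$ of $\otimes^r(V\oplus\overline V)$ is, after a suitable ordering, unitriangular (up to units of $\bZ[q,q^{-1}]$); this immediately implies that the $A(w)$ form a basis. Since both index sets are $M^r$ (the words of length $r$ in $\{1,\overline 1,\ldots,n,\overline n\}$), it suffices to produce a partial order $\preceq$ on $M^r$ such that the state-sum expression \eqref{hf} satisfies $A^w_w$ is a unit and $A^w_x = 0$ unless $x\preceq w$. The natural candidate for $\preceq$ is the weight-refined dominance-type order: first compare types, then compare weights of initial segments, exactly the order under which the crystal/growth structure is triangular; this is also compatible with the claim in the introduction that the change of basis ``is triangular and preserves the weight.''

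First I would analyze the state sum for a single triangle (length $r=1$): here $F(w)$ for $w = a$ (or $\overline a$) is the basic trivalent intertwiner of Example in \S\ref{reps}, and one checks directly from the explicit formulas for the action on $\bigwedge V$ that $\psi(v^{\mathrm{hi}}\otimes v^{\mathrm{lo}})$ has a nonzero (unit) coefficient on the basis vector indexed by $w$ itself and that all other nonzero coefficients are on strictly lower words in $\preceq$. Next I would set up the inductive step using the diamonds: a triangle of length $r$ is built by stacking a row of diamonds on a triangle of length $r-1$ (or alternatively a length-$r-1$ triangle plus one more letter on the top edge), and the state sum factorizes as a sum over the state on the interface edge of the product of the heft of the inner triangle and the heft contributed by the new row of diamonds. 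The diamonds in the two cases $a\ne b$ and $a=b$ each realize one of the intertwiners $V(r)\otimes V(s)\to V(r+s)$ of (2), and from the explicit comultiplication/multiplication formulas on $\bigwedge V$ one reads off that they too are ``triangular'' in the appropriate sense: the state that propagates the highest/lowest weight data unchanged has unit heft, and any deviation strictly lowers the weight of the relevant initial segment. Composing the triangularity of the inner triangle with that of the diamond row, and summing over the interface state, the leading term (the one giving $A^w_w$) comes from the unique all-``highest'' interface state and is a product of units, hence a unit; every other contribution lies strictly below $w$.

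The main obstacle will be the bookkeeping of the partial order: one must choose $\preceq$ so that it is simultaneously (i) respected by the single-triangle calculation, (ii) respected by the diamond row, and (iii) fine enough that the diagonal entries $A^w_w$ are genuinely isolated (no cancellation, no coincidental off-diagonal term at the same level). The weight of each initial segment of the word is the natural invariant to track — the growth algorithm is precisely designed so that the boundary edges $OA$ record a highest-weight filling and $OB$ a lowest-weight filling — and I expect the correct statement to be: order words first by type, then lexicographically by the sequence of partial weights $w_1,\,w_1+w_2,\,\ldots$ compared in dominance. Verifying that the state sum is upper-triangular for this order reduces, via the factorization above, to the rank-one case $\fsl(2)$ of \S\ref{twopart}, where the claim is the classical statement that the Temperley–Lieb / parenthesization basis is triangular against the standard basis; the general case then follows by the wave-graph decomposition of \S\ref{wave}, which expresses the length-$kn$ growth diagram as a superposition of $n-1$ independent $\fsl(2)$ growth diagrams, so that heft, states, and the partial order all factor through the $n-1$ two-colour reductions $w\mapsto w_i$. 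Once triangularity with unit diagonal is in hand over $\bZ[q,q^{-1}]$, the set $\{A(w)\mid w\in M^r\}$ is a basis not only over $\bQ(q)$ but already over the integral form, and the bar-invariance asserted in the introduction follows from bar-invariance of each basic tensor together with bar-invariance of the (integer) partial order data.
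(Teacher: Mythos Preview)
Your overall strategy---prove that the transition matrix is triangular with unit diagonal---is exactly what the paper does, but you have made the argument much harder than it needs to be, and one of your reductions does not actually cover the theorem as stated.

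The paper simply takes $\le$ to be the \emph{lexicographic order on words} in $M^r$. With the boundary conditions fixed (highest weight labels on $OA$, lowest weight labels on $OB$, and the word $x$ read off along $AB$), one checks directly from the state-sum \eqref{hf} that any admissible state forces $x\le w$, and that for $x=w$ there is a \emph{unique} state: every diamond is filled in the ``canonical'' way shown in the paper's proof. Hence $A^w_w$ is a single heft factor, a unit in $\bZ[q,q^{-1}]$. No induction on $r$, no dominance order on partial weights, and no rank reduction is required.

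Your proposed reduction to $\fsl(2)$ via the wave-graph decomposition of \S\ref{wave} has a genuine gap: that decomposition is defined only for words in the alphabet $\{1,\ldots,n\}$, i.e.\ for type $+^r$, whereas Theorem~\ref{mn} concerns all of $\otimes^r(V\oplus\overline V)$, so words with letters $\overline a$ as well. The superposition-of-pages argument therefore does not apply to mixed types. Moreover, even in the pure case it is not clear that the heft factors through the $n-1$ two-colour reductions $w\mapsto w_i$: the heft contributions $(-q)^{-\pi(I,J)}$ count inversions between \emph{all} pairs in $I\times J$, not only adjacent colours, so the claimed factorisation of heft across pages would itself need proof. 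Replacing your dominance order and wave-graph reduction by the plain lexicographic order on $M^r$ avoids both issues and gives the result in a few lines.
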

\begin{proof} This is equivalent to the statement that the matrix $A$ is invertible.
Let $\le$ be the lexicographic order on $M^r$. It follows from \eqref{hf} that
if $A^w_x\ne 0$ then $x\le w$ and $A^w_w\in \bZ[q,q^{-1}]$ is a unit. This means
$A$ is a triangular matrix with units on the diagonal and so is invertible.

The reason $A^w_w$ is a unit is that the sum \eqref{hf} only has one term. This is the state in which the states of every diamond are given by
\[ \includegraphics[width=1in]{flow.41} \]
\end{proof}

Denote this basis by $B^r$. For each type $u$ of length $r$ put 
\[ B(u)=\{ \psi(w) | \text{$w$ has type $u$}\} \]
Then $B(u)$ is a basis of $V(u)$.

The weight of a word $w$ is the vector $\lambda(w)=(\lambda_1,\ldots ,\lambda_n)$.
Write $\#a$ for the number of occurrences of the letter $a$ in $w$.
Then $\lambda_a=\#a - \#\overline{a}$ for $1\le a\le n$.

There are two weights associated to a flow diagram. One is $H$ which is the weight of the
edge $OA$ and the other is $D$ which is the weight of the edge $OB$. For the flow diagram
of $w$ we have $\lambda(w)=H-D$. Furthermore for every state $\sigma$ of $F(w)$ the
weight of $w(\sigma)$ is the weight of $w$.

\begin{cor}\label{inv} For each type $u$, the set
\[ \{ A(w)| H(w)=0, D(w)=0 \} \]
is a basis of the invariant tensors in $V(u)$.
\end{cor}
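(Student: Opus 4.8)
The plan is to deduce Corollary \ref{inv} from Theorem \ref{mn} by identifying exactly which basis elements $A(w)$ land in the subspace of invariant tensors. First I would recall that, by the construction of the flow diagram $F(w)$ and the discussion preceding the corollary, each flow diagram is an intertwiner $F(w)\colon V(OA)\otimes V(OB)^*\rightarrow V(u)$, and the associated tensor $A(w)$ is $F(w)(v^{\mathrm{hi}}\otimes v^{\mathrm{lo}})$, where $v^{\mathrm{hi}}$ is the tensor product of highest weight vectors on the edge $OA$ and $v^{\mathrm{lo}}$ is the tensor product of lowest weight vectors on $V(OB)^*$. Since an intertwiner preserves weights, the weight of $A(w)$ equals the weight of $v^{\mathrm{hi}}\otimes v^{\mathrm{lo}}$, which is $H(w) - D(w)$ (the weight of the highest weight vectors on $OA$ minus the weight dual to the lowest weight vectors on $OB$). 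So the condition $H(w)=0$ and $D(w)=0$ certainly forces $A(w)$ to have weight zero, hence to be a candidate invariant tensor; but weight zero alone does not imply invariance, so this is only the easy half.

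The core of the argument is the reverse containment: showing that the $A(w)$ with $H(w)=D(w)=0$ actually span all invariant tensors, and that no $A(w)$ with $H(w)\ne 0$ or $D(w)\ne 0$ can be needed. For the first point, I would argue by a triangularity/dimension count: by Theorem \ref{mn}, $B^r = \{A(w) : w\in M^r\}$ is a basis of $\otimes^r(V\oplus\overline V)$ whose change-of-basis matrix to the tensor-product basis is triangular with unit diagonal with respect to the lexicographic order. This triangularity refines to each weight space, since (as noted just before the corollary) every state $\sigma$ of $F(w)$ has $w(\sigma)$ of the same weight as $w$; hence $A^w_x\ne 0$ implies $x$ and $w$ have the same weight, and within a fixed weight the matrix is still triangular with unit diagonal. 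Therefore $\{A(w) : \lambda(w)=0\}$ is a basis of the zero-weight space of $\otimes^r(V\oplus\overline V)$. Intersecting with $V(u)$, the set $\{A(w): \lambda(w)=0, \text{ type}(w)=u\}$ is a basis of the zero-weight space of $V(u)$.

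It then remains to pin down that, among words $w$ of type $u$ with $\lambda(w)=0$, the invariant tensors are spanned precisely by those with $H(w)=0$ and $D(w)=0$. Here I would use that $A(w)$ lies in the image of the intertwiner $F(w)$ from $V(OA)\otimes V(OB)^*$, so when $H(w)=0$ the edge $OA$ carries (a tensor product of) trivial representations and similarly for $OB$ when $D(w)=0$; in that case $v^{\mathrm{hi}}\otimes v^{\mathrm{lo}}$ is an invariant tensor in the source, and an intertwiner sends invariants to invariants, so $A(w)$ is invariant. Conversely, I would show that the flow diagrams of words with $\lambda(w)=0$ but $(H(w),D(w))\ne(0,0)$ contribute basis vectors that are weight-zero but not highest-weight (equivalently, not invariant): this should follow by comparing the dimension of the space of invariants of $V(u)$ with the number of words $w$ of type $u$ satisfying $H(w)=D(w)=0$, using that the growth algorithm sets up a bijection between such words and flow diagrams with trivial top-edge labels, i.e. with actual closed diagrams (invariant tensors of the diagram category). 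The main obstacle I anticipate is precisely this last combinatorial matching — verifying that $H(w)=D(w)=0$ is the exact condition characterizing invariance, rather than merely a sufficient one, which requires knowing that the flow diagrams with $H=D=0$ account for the full dimension of the invariant space; this is where one must invoke that the functor from the diagram category is full on invariants (which the paper asserts is a consequence of $B^r$ being a basis) together with a count of closed flow diagrams.
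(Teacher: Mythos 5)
Your first two steps are sound and match what the paper intends as the easy content of Corollary \ref{inv}: since every state of $F(w)$ has the same weight as $w$, the triangular change of basis of Theorem \ref{mn} restricts to each weight space, so $\{A(w):\lambda(w)=0,\ \mathrm{type}(w)=u\}$ is a basis of the zero-weight space of $V(u)$; and when $H(w)=D(w)=0$ the vector $v^{\mathrm{hi}}\otimes v^{\mathrm{lo}}$ is invariant, so $A(w)\in\mathrm{Inv}(V(u))$. (One small correction here: for $\fgl(n)$, $H(w)=0$ does not force the edge $OA$ to carry trivial representations; it forces the labels crossing $OA$ to be $n$ and $\overline{n}$ in equal numbers, i.e.\ $\det_q$ and $\det_q^{-1}$, whose tensor product is trivial --- the conclusion survives, but your justification needs this extra step.)

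The genuine gap is the spanning direction, and you have not closed it. Everything hinges on the equality of the number of words $w$ of type $u$ with $H(w)=D(w)=0$ with $\dim\mathrm{Inv}(V(u))$ (equivalently, on knowing that no $A(w)$ with $\lambda(w)=0$ but $(H(w),D(w))\neq(0,0)$ is needed to span the invariants), and you only name this as ``the main obstacle'' rather than proving it. Worse, the tool you propose to invoke --- that the functor from the diagram category is full on invariant spaces --- is circular: that fullness is exactly what Corollary \ref{inv} establishes (the paper lists it as a consequence, not a hypothesis), and it does not follow formally from Theorem \ref{mn}, which is a statement about the whole tensor space, not about $\Hom$-spaces. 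Note also that one cannot shortcut the count by arguing that $A(w)$ with $(H,D)\neq(0,0)$ has no invariant component: $v^{\mathrm{hi}}\otimes v^{\mathrm{lo}}$ in $V(OA)\otimes V(OB)^*$ generally does have a nonzero component in the trivial isotypic part (already for $V(\lambda)\otimes V(\lambda)^*$ the highest-tensor-lowest vector pairs nontrivially with the coevaluation vector), so its image under $F(w)$ need not avoid the invariants. What is actually required is a combinatorial property of the growth algorithm --- that the boundary data $(H(w),D(w))$ computes the crystal/highest-weight decomposition of $V(u)$, so that words with $H=D=0$ are equinumerous with a basis of $\mathrm{Inv}(V(u))$ (the case $u=+^k$ being the result of Mihailovs cited in the paper). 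Without supplying that count, your argument establishes linear independence and containment but not that the proposed set is a basis.
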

The case $u=+^k$ is \cite[Theorem 1]{math.RT/9802119}.

\section{Applications}
The first applications are to Schur-Weyl duality and extensions. For any
$u$, the vector space $\End( V(u))$ can be identified with the invariant tensors
in $V(u)\otimes V(u)^*$. Then by Corollary \ref{inv} we have constructed a basis of
this algebra. This approach is characteristic-free. For $u=+^r$ this gives the
original Schur-Weyl duality, see \cite{MR2513263}.
For $u=+^r-^s$, $\End( V(u))$ is a $q$-analogue of the
walled Brauer algebra. The walled Brauer algebra is studied in \cite{MR2417984}
and the $q$-analogue is studied in \cite{swI} and \cite{swII}.
For $u=(+-)^r$ we have the
derangement algebras studied in \cite{MR1941983} and \cite{MR1280591}. These versions of
Schur-Weyl duality over $\bC$ are discussed in \cite{MR2125066}.

The main open problem is the problem of finding defining relations.
In a theoretical sense we have solved this problem.
Given any flow diagram we can expand it in terms of the tensor product basis
and then change basis to the basis of flow diagrams. Unfortunately this is not useful
and it is an open problem to find a more effective algorithm for writing a general
flow diagram in this basis. Defining relations for $\SL(2)$ are well known,
defining relations for $\SL(3)$ are given in \cite{MR97f:17005}, relations
for $\SL(4)$ are given in \cite{math.QA/0310143} with the conjecture that these
are defining relations and relations for all $\SL(n)$ are given in
\cite{morrison-2007}.

As a first step torwards this we present an algorithm which decides if a given
flow diagram is an element of the basis. First we construct a word from the flow
diagram using cut paths and then we construct the flow diagram from the word.
The original flow diagram is a basis vector if and only if these two flow diagrams
are the same.

\newcommand{\etalchar}[1]{$^{#1}$}
\def\cprime{$'$} \def\cprime{$'$}


\begin{thebibliography}{CDVDM08}

\bibitem[BCH{\etalchar{+}}94]{MR1280591}
Georgia Benkart, Manish Chakrabarti, Thomas Halverson, Robert Leduc, Chanyoung
  Lee, and Jeffrey Stroomer.
\newblock Tensor product representations of general linear groups and their
  connections with {B}rauer algebras.
\newblock {\em J. Algebra}, 166(3):529--567, 1994.

\bibitem[BD02]{MR1941983}
Georgia Benkart and Stephen Doty.
\newblock Derangements and tensor powers of adjoint modules for
  {$\mathfrak{sl}\sb n$}.
\newblock {\em J. Algebraic Combin.}, 16(1):31--42, 2002.

\bibitem[Big10]{MR2577673}
Stephen Bigelow.
\newblock Skein theory for the {$ADE$} planar algebras.
\newblock {\em J. Pure Appl. Algebra}, 214(5):658--666, 2010.

\bibitem[Bru06]{MR2271570}
Jonathan Brundan.
\newblock Dual canonical bases and {K}azhdan-{L}usztig polynomials.
\newblock {\em J. Algebra}, 306(1):17--46, 2006.

\bibitem[BW99]{MR1686423}
John~W. Barrett and Bruce~W. Westbury.
\newblock Spherical categories.
\newblock {\em Adv. Math.}, 143(2):357--375, 1999.

\bibitem[CDVDM08]{MR2417984}
Anton Cox, Maud De~Visscher, Stephen Doty, and Paul Martin.
\newblock On the blocks of the walled {B}rauer algebra.
\newblock {\em J. Algebra}, 320(1):169--212, 2008.

\bibitem[DDS09a]{swI}
R.~Dipper, S.~Doty, and F.~Stoll.
\newblock Quantized mixed tensor space and schur-weyl duality i, 2009.

\bibitem[DDS09b]{swII}
R.~Dipper, S.~Doty, and F.~Stoll.
\newblock Quantized mixed tensor space and schur-weyl duality ii, 2009.

\bibitem[dG02]{MR1959260}
Willem~A. de~Graaf.
\newblock Constructing canonical bases of quantized enveloping algebras.
\newblock {\em Experiment. Math.}, 11(2):161--170, 2002.

\bibitem[Dot04]{MR2125066}
Stephen Doty.
\newblock New versions of {S}chur-{W}eyl duality.
\newblock In {\em Finite groups 2003}, pages 59--71. Walter de Gruyter, Berlin,
  2004.

\bibitem[Dot09]{MR2513263}
Stephen Doty.
\newblock Schur-{W}eyl duality in positive characteristic.
\newblock In {\em Representation theory}, volume 478 of {\em Contemp. Math.},
  pages 15--28. Amer. Math. Soc., Providence, RI, 2009.

\bibitem[Du92]{MR1165373}
Jie Du.
\newblock Canonical bases for irreducible representations of quantum {${\rm
  GL}\sb n$}.
\newblock {\em Bull. London Math. Soc.}, 24(4):325--334, 1992.

\bibitem[Du95]{MR1332884}
Jie Du.
\newblock Canonical bases for irreducible representations of quantum {${\rm
  GL}\sb n$}. {II}.
\newblock {\em J. London Math. Soc. (2)}, 51(3):461--470, 1995.

\bibitem[FK97]{MR1446615}
Igor~B. Frenkel and Mikhail~G. Khovanov.
\newblock Canonical bases in tensor products and graphical calculus for {$U\sb
  q(\mathfrak{sl}\sb 2)$}.
\newblock {\em Duke Math. J.}, 87(3):409--480, 1997.

\bibitem[JK]{math.GT/0506403}
Myeong-Ju Jeong and Dongseok Kim.
\newblock {Quantum $\mathfrak{sl}(n,\mathbb{C})$ link invariants},
  arXiv:math.GT/0506403.

\bibitem[Jon01]{MR1929335}
Vaughan F.~R. Jones.
\newblock The annular structure of subfactors.
\newblock In {\em Essays on geometry and related topics, {V}ol. 1, 2},
  volume~38 of {\em Monogr. Enseign. Math.}, pages 401--463. Enseignement
  Math., Geneva, 2001.

\bibitem[Kan98]{MR1637330}
Masaharu Kaneda.
\newblock Based modules and good filtrations in algebraic groups.
\newblock {\em Hiroshima Math. J.}, 28(2):337--344, 1998.

\bibitem[Kim]{math.QA/0310143}
Dongseok Kim.
\newblock {Graphical Calculus on Representations of Quantum Lie Algebras},
  arXiv:math.QA/0310143.

\bibitem[KK99]{MR1680395}
Mikhail Khovanov and Greg Kuperberg.
\newblock Web bases for {${\rm sl}(3)$} are not dual canonical.
\newblock {\em Pacific J. Math.}, 188(1):129--153, 1999.

\bibitem[KT99]{MR2000c:20066}
Allen Knutson and Terence Tao.
\newblock The honeycomb model of {${\rm GL}\sb n({\bf C})$} tensor products.
  {I}. {P}roof of the saturation conjecture.
\newblock {\em J. Amer. Math. Soc.}, 12(4):1055--1090, 1999,
  arXiv:math.RT/9807160.

\bibitem[Kup96]{MR97f:17005}
Greg Kuperberg.
\newblock Spiders for rank {$2$} {L}ie algebras.
\newblock {\em Comm. Math. Phys.}, 180(1):109--151, 1996, arXiv:q-alg/9712003.

\bibitem[Lit97]{MR1457850}
Peter Littelmann.
\newblock An algorithm to compute bases and representation matrices for {${\rm
  SL}\sb {n+1}$}-representations.
\newblock {\em J. Pure Appl. Algebra}, 117/118:447--468, 1997.
\newblock Algorithms for algebra (Eindhoven, 1996).

\bibitem[LT96]{MR1392496}
Bernard Leclerc and Jean-Yves Thibon.
\newblock The {R}obinson-{S}chensted correspondence, crystal bases, and the
  quantum straightening at {$q=0$}.
\newblock {\em Electron. J. Combin.}, 3(2):Research Paper 11, approx.\ 24 pp.\
  (electronic), 1996.
\newblock The Foata Festschrift.

\bibitem[LT00]{MR1758283}
Bernard Leclerc and Philippe Toffin.
\newblock A simple algorithm for computing the global crystal basis of an
  irreducible {$U\sb q({\rm sl}\sb n)$}-module.
\newblock {\em Internat. J. Algebra Comput.}, 10(2):191--208, 2000.

\bibitem[Lus93]{MR1227098}
George Lusztig.
\newblock {\em Introduction to quantum groups}, volume 110 of {\em Progress in
  Mathematics}.
\newblock Birkh\"auser Boston Inc., Boston, MA, 1993.

\bibitem[Mih]{math.RT/9802119}
Aleksandrs Mihailovs.
\newblock {Tensor invariants of SL(n), wave graphs and L-tris},
  arXiv:math.RT/9802119.

\bibitem[Mor07]{morrison-2007}
Scott Morrison.
\newblock {\em A Diagrammatic Category for the Representation Theory of
  U\_q(sl\_n)}.
\newblock PhD thesis, University of California, Berkeley, 2007.

\bibitem[MOY98]{MR1659228}
Hitoshi Murakami, Tomotada Ohtsuki, and Shuji Yamada.
\newblock Homfly polynomial via an invariant of colored plane graphs.
\newblock {\em Enseign. Math. (2)}, 44(3-4):325--360, 1998.

\bibitem[MPS10]{MR2559686}
Scott Morrison, Emily Peters, and Noah Snyder.
\newblock Skein theory for the {$D\sb {2n}$} planar algebras.
\newblock {\em J. Pure Appl. Algebra}, 214(2):117--139, 2010.

\bibitem[Oht02]{MR2065029}
T.~Ohtsuki.
\newblock Problems on invariants of knots and 3-manifolds.
\newblock In {\em Invariants of knots and 3-manifolds ({K}yoto, 2001)},
  volume~4 of {\em Geom. Topol. Monogr.}, pages i--iv, 377--572. Geom. Topol.
  Publ., Coventry, 2002.
\newblock With an introduction by J. Roberts.

\bibitem[PPR09]{MR2519848}
T.~Kyle Petersen, Pavlo Pylyavskyy, and Brendon Rhoades.
\newblock Promotion and cyclic sieving via webs.
\newblock {\em J. Algebraic Combin.}, 30(1):19--41, 2009.

\bibitem[Rho10]{rhoades}
Brendon Rhoades.
\newblock Cyclic sieving, promotion, and representation theory.
\newblock {\em J. Combin. Theory Ser. A}, 117(1):38--76, 2010.

\bibitem[Sag10]{Sagan}
Bruce Sagan.
\newblock The cyclic sieving phenomenon: a survey, 2010.

\bibitem[Wes95]{MR96h:20029}
B.~W. Westbury.
\newblock The representation theory of the {T}emperley-{L}ieb algebras.
\newblock {\em Math. Z.}, 219(4):539--565, 1995.

\bibitem[Wes07]{MR2320368}
Bruce~W. Westbury.
\newblock Enumeration of non-positive planar trivalent graphs.
\newblock {\em J. Algebraic Combin.}, 25(4):357--373, 2007.

\bibitem[Wes08]{MR2388243}
Bruce~W. Westbury.
\newblock Invariant tensors for the spin representation of
  {$\mathfrak{so}(7)$}.
\newblock {\em Math. Proc. Cambridge Philos. Soc.}, 144(1):217--240, 2008.

\bibitem[Wes09]{MR2510062}
Bruce~W. Westbury.
\newblock Invariant tensors and cellular categories.
\newblock {\em J. Algebra}, 321(11):3563--3567, 2009.

\bibitem[Wes10]{csp}
Bruce~W. Westbury.
\newblock Invariant tensors and the cyclic sieving phenomenon, 2010.

\end{thebibliography}
\end{document}